\documentclass[preprint, 11pt, english]{elsarticle}
\usepackage{amsmath}
\usepackage{latexsym, amssymb}
\usepackage{amsthm}
\usepackage{txfonts,xcolor}
\usepackage{hyperref}

\newtheorem{thm}{Theorem}[section] 

\newtheorem{cor}[thm]{Corollary}

\newtheorem{lem}[thm]{Lemma}

\theoremstyle{definition}
\newtheorem{rem}[thm]{Remark}
\newtheorem{exmpl}[thm]{Example}

\newcommand\operA[2]{{\if!#2!\operatorname{#1}\else{\operatorname{#1}_{#2}^{\phantom{I}}}\fi}} 

\newcommand\Cref[1]{{Corollary~\ref{#1}}}%

\def\tr{{\operatorname{Tr}}}

\def\norm{{\operatorname{Norm}}}

\newcommand{\Trace}[1][]{\if!#1!\operatorname{Tr}\else{\operatorname{Tr}_{#1}^{\phantom{I}}}\fi} 

\long\def\forget#1\forgotten{{}} %

\def\({\left(}
\def\){\right)}

\newif\iffurther
\furtherfalse

\newif\ifXY 
\XYtrue     
\ifXY

\input xy
\input xyidioms.tex
\usepackage{xy}
\xyoption{all} %
\fi 

\usepackage{babel}

\makeatletter
\def\ps@pprintTitle{%
	\let\@oddhead\@empty
	\let\@evenhead\@empty
	\def\@oddfoot{}%
	\let\@evenfoot\@oddfoot}
\makeatother

\begin{document}

\begin{frontmatter}

\title{Points with Commuting Coordinates over Division Rings}
\author{Adam Chapman}
\ead{adam1chapman@yahoo.com}
\address{Department of Computer Science, Academic College of Tel-Aviv-Yaffo, Rabenu Yeruham St., P.O.B 8401 Yaffo, 6818211, Israel}
\author{Solomon Vishkautsan}
\ead{wishcow@gmail.com}
\address{Department of Computer Science, Tel-Hai University of Kiryat Shmona in the Galilee, Upper Galilee, 12208, Israel}

\begin{abstract}

We study properties concerting polynomials in $D[x_1,\dots,x_n]$ and the set $D_c^n$ of points over a division ring $D$ with commuting coordinates.
Alon and Paran showed that for $D=\mathbb{H}$, this set parametrizes the set of maximal left ideals of $D[x_1,\dots,x_n]$.
Here we show that any root of $f$ in $D_c^n$ is also a root of its (reduced) norm $\norm(f)$, and that if a point in $D_c^n$ is a fixed point of an $n$-tuple $T=(f_1,\dots,f_n)$ of polynomials in $n$ variables, then it is a fixed point of $T^{\circ m}$ for any positive integer $m$.
\end{abstract}

\begin{keyword}
Division Rings, Octonion Algebras, Noncommutative Polynomials, Polynomial Rings, Noncommutative Algebraic Geometry, Fixed Points, Noncommutative Discrete Algebraic Dynamics
\MSC[2020] primary 16S36; secondary 16K20, 17A75, 17A35, 37C25, 14A22
\end{keyword}

\end{frontmatter}

\section{Introduction}

Given a division ring $D$, we define $D[x_1,\dots,x_n]$ to be $D \otimes_F F[x_1,\dots,x_n]$ where $Z(D)=F$.
Each polynomial $f \in D[x_1,\dots,x_n]$ can be written as the sum of monomials $c_{\vec{d}} \,x_1^{d_1} \dots x_n^{d_n}$ where $\vec{d}=(d_1,\dots,d_n)\in I$ and $I$ is a finite set of $n$-tuples of non-negative integers.
The substitution rule of $\vec{a}=(a_1,\dots,a_n)\in D^n$ in $f$ is thus $$f(\vec{a})=\sum_{\vec{d}\in I} c_{\vec{d}} \, a_1^{d_1}\dots a_n^{d_n}.$$
Similarly, one can substitute an $n$-tuple of polynomials $g_1,\dots,g_n\in D[x_1,\dots,x_n]$ in $f$ by 
$$f(g_1,\dots,g_n)=\sum_{\vec{d}\in I} c_{\vec{d}} \, g_1^{d_1}\dots g_n^{d_n}.$$
Fixing $\vec{a}\in D^n$, the substitution map $f\mapsto f(\vec{a})$ is additive but not multiplicative, and thus, not a ring homomorphism from $D[x_1,\dots,x_n]$ to $D$. For instance, if $f,g,h \in \mathbb{H}[x]$ and $h=fg$ where $f=ix$ and $g=jx$, then $f(i)=-1$, $g(i)=-ij$ and thus $f(i)g(i)=ij$, but $h(i)=iji^2=-ij$. And yet, substitution provides information about the polynomials, e.g., if $f(a)=0$ for $a\in D$ and $f\in D[x]$, then $f=g(x-a)$ for some $g\in D[x]$ (a classical result of Wedderburn \cite{Wedderburn:1921}). This phenomenon holds true also when we replace $D$ with an octonion algebra $A$, see \cite{Chapman:2020b}.

The special set of points $D_c^n=\{ \vec{a} \in D^n : a_k a_\ell=a_\ell a_k \forall k,\ell\}$ showed up to be of certain importance in the special case of $D=\mathbb{H}$. In particular, it was shown in \cite{AlonParan:2021} that $L$ is a maximal left ideal of $\mathbb{H}[x_1,\dots,x_n]$ if and only if $L=\langle x_1-a_1,\dots,x_n-a_n\rangle$ for some $\vec{a}\in D_c^n$, and in \cite{AlonParan:2024} that if $f\in \mathbb{H}[x_1,\dots,x_n]$ vanishes at all $\vec{a} \in \mathbb{H}_c^n$ at which $g\in \mathbb{H}[x_1,\dots,x_n]$ vanishes, then $f$ vanishes at all $\vec{a} \in \mathbb{H}^n$ at which $g\in \mathbb{H}[x_1,\dots,x_n]$ vanishes.

Our goal here is to prove certain properties the points in $D_c^n$ satisfy. In Section~\ref{sec:root-of-left-multiple} we prove that if $f \in D[x_1,\dots,x_n]$ vanishes at $\vec{a}\in D_c^n$ then $h=gf$ vanishes at $\vec{a}$ for any $g\in D[x_1,\dots,x_n]$. 
As a result, any root of $f$ in $D_c^n$ is also a root of $\norm(f)$ when $D$ is a central division ring, generalizing a result from \cite{ChapmanMachen:2017} that covers the case of $n=1$. We show in Section~\ref{sec:oct-roots} that the last statement also extends to octonion algebras, generalizing a result from \cite{Chapman:2020} that covers the case of $n=1$. 

Motivated by classical algebraic geometry and generalizations of the nullstellensatz to the quaternions in \cite{AlonParan:2021,AlonParan:2024, AlonChapmanParan:2025}, we look at the zero sets of systems of polynomials in Section~\ref{sec:zeros-sets}. While the classical property that a zero set of a system of polynomials equals the zero set of the ideal it generates fails for general division rings, we prove that this equality holds if we restrict our evaluation to points in $D_c^n$ and consider left ideals.

Finally, in Section~\ref{sec:dynamics}, we apply these algebraic evaluation properties to discrete algebraic dynamics over division algebras, generalizing results from \cite{ChapmanVishkautsan:2021} to the multivariate case. Let $T=(f_1,\dots,f_n)$ be an $n$-tuple of polynomials in $D[x_1,\dots,x_n]$ and $T(\vec{a})=(f_1(\vec{a}),\dots,f_n(\vec{a}))=\vec{a}$ for $\vec{a} \in D_c^n$. The iterate $T^{\circ m}$ is defined recursively by $T^{\circ 1}=T$ and $T^{\circ m}=(f_1(T^{\circ (m-1)}),\dots,f_n(T^{\circ (m-1)}))$. We show that if a point $\vec{a} \in D_c^n$ commutes with its orbit (see Section~\ref{sec:dynamics} for the definition), non-commutative evaluation behaves nicely, i.e.\ commutes with iteration. In particular, if $\vec{a}$ is a fixed point of $T$, it remains a fixed point for all iterates of $T$.


\section{Roots of Multivariate Polynomials}\label{sec:root-of-left-multiple}

We start with general division rings, and later restrict to central division rings (see \cite{GordonMotzkin1965} and \cite{GilleSzamuely2006} for background).

\begin{thm} \label{thm:root-of-left-multiple}
Let $D$ be a division ring and $f \in D[x_1,\dots,x_n]$.
Then, for $\vec{a}\in D_c^n$, if $\vec{a}$ is a root of $f$, then it is a root of $h=gf$ for any $g\in D[x_1,\dots,x_n]$.
\end{thm}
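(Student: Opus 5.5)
By additivity of substitution and bilinearity of $(g,f)\mapsto gf$, it suffices to treat $g = c\,x_1^{e_1}\cdots x_n^{e_n}$, a single monomial. Indeed, $h=gf$ is a sum of such terms, so $h(\vec a)$ is the corresponding sum of the values $(g_i f)(\vec a)$.

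Fix such a $g$ and write $f=\sum_{\vec d\in I} c_{\vec d}\,x_1^{d_1}\cdots x_n^{d_n}$. In $D[x_1,\dots,x_n]=D\otimes_F F[x_1,\dots,x_n]$ the variables are central and commute with one another. Therefore
$$gf=\sum_{\vec d\in I} c\,c_{\vec d}\,x_1^{e_1+d_1}\cdots x_n^{e_n+d_n}.$$
Substituting $\vec a$ gives
$$(gf)(\vec a)=\sum_{\vec d\in I} c\,c_{\vec d}\,a_1^{e_1+d_1}\cdots a_n^{e_n+d_n}.$$

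The key step uses $\vec a\in D_c^n$. Since the $a_k$ pairwise commute, $a_1^{e_1+d_1}\cdots a_n^{e_n+d_n}=a_1^{d_1}\cdots a_n^{d_n}\,a_1^{e_1}\cdots a_n^{e_n}$. Writing $\vec a^{\vec e}=a_1^{e_1}\cdots a_n^{e_n}$, this gives
$$(gf)(\vec a)=c\Big(\sum_{\vec d\in I} c_{\vec d}\,a_1^{d_1}\cdots a_n^{d_n}\Big)\vec a^{\vec e}=c\,f(\vec a)\,\vec a^{\vec e}.$$
So $(gf)(\vec a)=0$ whenever $f(\vec a)=0$. Summing over the monomials of a general $g$ finishes the proof.

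The only point needing care is the reordering of the powers. That is exactly where commutativity of the coordinates is required, and it is why the argument fails for general $\vec a\in D^n$ (compare the example $f=ix$, $g=jx$ in the introduction, where the issue is instead the failure of $\vec a$ to commute with the coefficients, which does not matter here since $f(\vec a)$ appears as a block).

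Note also that the coefficient $c$ of $g$ multiplies on the left. It therefore never needs to pass through the $a_k$ or the $c_{\vec d}$, which is why left multiples work and right multiples need not.
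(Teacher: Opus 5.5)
Your proof is correct and is essentially the paper's argument. You expand $gf$ using that the variables are central, use pairwise commutativity of the $a_k$ to rewrite $a_1^{d_1+e_1}\cdots a_n^{d_n+e_n}$ as $a_1^{d_1}\cdots a_n^{d_n}\,a_1^{e_1}\cdots a_n^{e_n}$, and factor out $f(\vec a)$ between the left coefficient of $g$ and the right factor; the only cosmetic difference is that you first reduce to a monomial $g$ by additivity, whereas the paper handles the double sum over $I\times J$ directly.
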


\begin{proof}
Write $f=\sum_{\vec{d} \in I} b_{\vec{d}} x_1^{d_1}\dots x_n^{d_n}$ and $g=\sum_{\vec{e} \in J} c_{\vec{e}} x_1^{e_1}\dots x_n^{e_n}$. So, 
$$h=\sum_{\vec{d} \in I} \sum_{\vec{e} \in J} c_{\vec{e}} b_{\vec{d}} x_1^{d_1+e_1} \dots x_n^{d_n+e_n}.$$
Suppose $f(\vec{a})=0$. Then, 
\begin{eqnarray*}
h(\vec{a}) & = & \sum_{\vec{d} \in I} \sum_{\vec{e} \in J} c_{\vec{e}} b_{\vec{d}} a_1^{d_1+e_1} \dots a_n^{d_n+e_n}\\
&=&\sum_{\vec{e} \in J} c_{\vec{e}} \left(\sum_{\vec{d} \in I} b_{\vec{d}} a_1^{d_1} \dots a_n^{d_n}\right) a_1^{e_1} \dots a_n^{e_n}\\
&=&\sum_{\vec{e} \in J} c_{\vec{e}} (f(\vec{a})) a_1^{e_1} \dots a_n^{e_n}=0.
\end{eqnarray*}

\end{proof}

Suppose now that $D$ is a division ring of finite dimension $m$ over its center $F$. Then, $D$ contains a maximal subfield $K$, and $D\otimes_F K \cong M_m(K)$. This gives rise to the (reduced) norm form $\norm : D \rightarrow F$ that maps each $t\in D$ to the determinant of its embedding $T$ in $M_m(K)$.
The (reduced) trace $\tr(t)$ of $t$ is similarly defined as $\tr(T)$.
More generally, each such $t$ gives rise to a characteristic polynomial
$p_t(\lambda)=|\lambda I-T|=\lambda^m+\varphi_1(t) \lambda^{m-1}+\dots+\varphi_{m-1}(t)\lambda+\varphi_m(t)$ where each $\varphi_r$ is a homogeneous polynomial form of degree $r$ from $D$ to $F$, and in particular $\varphi_1(t)=-\tr(t)$ and $\varphi_m(t)=(-1)^m \norm(t)$.
These maps do not depend on the choice of maximal subfield $K$ of $D$, and are homogeneous polynomial forms of degree $m$ for any choice of basis for $D$ as an $F$-vector space. This map extends to $D[x_1,\dots,x_n]$, and each $f\in D[x_1,\dots,x_n]$ is thus mapped to $\norm(f)\in F[x_1,\dots,x_n]$, and in particular $f|\norm(f)$, for $(-1)^{m+1}\norm(f)=(f^{m-1}+\varphi_1(f)f^{m-2}+\dots+\varphi_{m-1}(f))f$. When $D$ is a quaternion algebra, $\norm(f)=\bar{f}f$, where $\bar{z}$ is the (symplectic) conjugate of $z$ in $D$.

\begin{cor}
If $D$ is a central division $F$-algebra and $f\in D[x_1,\dots,x_n]$, then any root $\vec{a} \in D_c^n$ of $f$ is also a root of $\norm(f)\in F[x_1,\dots,x_n]$.
\end{cor}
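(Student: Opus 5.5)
The plan is to combine Theorem~\ref{thm:root-of-left-multiple} with the factorization of $\norm(f)$ recorded just before the statement. Set
$$g=(-1)^{m+1}\left(f^{m-1}+\varphi_1(f)f^{m-2}+\dots+\varphi_{m-1}(f)\right)\in D[x_1,\dots,x_n].$$
By the identity above, $\norm(f)=gf$. Theorem~\ref{thm:root-of-left-multiple} then says that any root $\vec{a}\in D_c^n$ of $f$ is a root of $gf=\norm(f)$, which is the claim.

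So the only real work is to confirm that the identity $(-1)^{m+1}\norm(f)=(f^{m-1}+\dots+\varphi_{m-1}(f))f$ holds in $D[x_1,\dots,x_n]$, with the product taken in that ring. This is the multiplication used in Theorem~\ref{thm:root-of-left-multiple}, namely $D\otimes_F F[x_1,\dots,x_n]$ with the variables central.

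To check it, I work in $D\otimes_F F(x_1,\dots,x_n)$. This is a central simple algebra of degree $\sqrt{m}$ over $F(x_1,\dots,x_n)$; it may not be a division algebra, but that does not matter here. The element $f$ satisfies its reduced characteristic polynomial by Cayley--Hamilton, applied after the splitting $\otimes K$. The coefficients of that polynomial are $\varphi_r(f)$, obtained by evaluating the forms $\varphi_r$ on the coordinates of $f$. These coordinates lie in $F[x_1,\dots,x_n]$, so each $\varphi_r(f)$ is a central polynomial. Writing $\chi$ for the reduced characteristic polynomial of $f$, we get $\chi(f)=0$. Its constant term is $\pm\norm(f)$, so moving that term to the other side and factoring out $f$ on the right expresses $\norm(f)$ as a left multiple of $f$.

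The one point that needs care is that "$f^k$" in this identity is the power in the polynomial ring, which is exactly where Theorem~\ref{thm:root-of-left-multiple} applies. Once the coefficients $\varphi_r(f)$ are known to be central, the factorization is a genuine product $g\cdot f$ in $D[x_1,\dots,x_n]$, and the argument is complete. The sign conventions are harmless, since a root of $\pm\norm(f)$ is a root of $\norm(f)$. The main (mild) obstacle is therefore just confirming that Cayley--Hamilton and the centrality of the $\varphi_r(f)$ hold over the polynomial ring, and the paper already asserts both.
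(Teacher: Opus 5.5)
Your proposal is correct and matches the paper's intended argument: apply Theorem~\ref{thm:root-of-left-multiple} to the factorization $(-1)^{m+1}\norm(f)=(f^{m-1}+\varphi_1(f)f^{m-2}+\dots+\varphi_{m-1}(f))f$, which the paper records just before the statement. Your justification of that identity, via Cayley--Hamilton in $D\otimes_F F(x_1,\dots,x_n)$ with central coefficients $\varphi_r(f)\in F[x_1,\dots,x_n]$, fills in a detail the paper only asserts.
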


This extends a result from \cite{ChapmanMachen:2017} where the case of $n=1$ was treated. Assuming that the roots of $\norm(f)$ in $D_c^n$ can be found, one can use their conjugacy class to try to retrieve the roots of $f$ in $D_c^n$. However, the only case where this results in a neat linear equation is when $D$ is a quaternion algebra and $n=1$.
Note that this corollary and its preceding theorem apply only to roots in $D_c^n$. The following easy example shows that it does not extend to general roots in $D^n$:

\begin{exmpl}
Consider $f=x+y-i-j\in \mathbb{H}[x,y]$. Then $\norm(f)=x^2+2xy-y^2+2$.
Now, $f(i,j)=0$ whereas $\norm(f)(i,j)=2ij\neq 0$.
\end{exmpl}

\section{Roots in octonion algebras} \label{sec:oct-roots}

Here, suppose that $A$ is an octonion algebra over a field $F$ and $A[x_1,\dots,x_n]$ is defined as $A\otimes_F F[x_1,\dots,x_n]$.
Recall that given a quaternion algebra $Q$ with symplectic involution $z\mapsto \bar{z}$, an octonion algebra $A=Q \oplus Q\ell$ is defined by $(q+r\ell)(s+t\ell)=qs+c \bar{t}r+(tq+s\bar{r})\ell$ where $c$ is a fixed element in $F^\times$.
When $\operatorname{char}(F)\neq 2$, $Q=(a,b)_{2,F}=F\langle i,j : i^2=a, j^2=b, ji=-ij \rangle$ for some $a,b \in F$, and then $A$ is denoted by $(a,b,c)_F$. 
When $\operatorname{char}(F)=2$, $Q=[a,b)_{2,F}=F \langle i,j : i^2+i=a, j^2=b, ji=ij+j \rangle$ for some $a\in F$ and $b\in F^\times$, and then $A$ is denoted by $[a,b,c)_F$ (see for example \cite{ElduqueVilla:2005}). The symplectic involution extends from $Q$ to $A$ by $\overline{q+r\ell}=\overline{q}-r\ell$, and it gives rise to the linear transformation $\tr(z)=\bar{z}+z$ and the quadratic form $\norm(z)=\bar{z}z$. This norm form is a quadratic 3-fold Pfister form, and it is hyperbolic if and only if $A$ is split, or equivalently, anisotropic if and only if $A$ is a division algebra.
The norm form extends of course to $A[x_1,\dots,x_n]$.

We define $\vec{a}\in A_c^n$ to be an $n$-tuple $\vec{a}=(a_1,\dots,a_n)$ in $A^n$ where $a_1,\dots,a_n$ belong to a single quadratic field extension $K$ of $F$ inside $A$. 

\begin{rem}
Note that this definition coincides with the condition that $a_k a_\ell=a_\ell a_k$ for all $k,\ell \in \{1,\dots,n\}$ when $\operatorname{char}(F)\neq 2$, but when  $\operatorname{char}(F)=2$, $A$ contains also a purely inseparable bi-quadratic extension of $F$, so requiring that the slots commute is not strong enough for our needs.
\end{rem}

Each polynomial $f\in A[x_1,\dots,x_n]$ can be written as $$f=\sum_{\vec{d}\in I} c_{\vec{d}} x_1^{d_1} x_2^{d_2}\dots x_n^{d_n},$$ and the substitution is set to be $f(\vec{a})=\sum_{\vec{d}\in I} c_{\vec{d}} (a_1^{d_1}(a_2^{d_2} (\dots (a_n^{d_n})\dots))$.

The octonion algebra $A$ is endowed with a symplectic involution $z\mapsto \bar{z}$, and $\norm(z)=\bar{z}z$ defines a quadratic 3-fold Pfister form from $A$ to $F$.
This map extends to $A[x_1,\dots,x_n]$, and for any $f\in A[x_1,\dots,x_n]$, we have $\norm(f)=\bar{f}f\in F[x_1,\dots,x_n]$.

\begin{thm}
For $\vec{a}\in A_c^n$, if $\vec{a}$ is a root of $f$, then it is a root of $\norm(f)$.
\end{thm}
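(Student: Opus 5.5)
The plan is to reduce to the associative situation of Theorem~\ref{thm:root-of-left-multiple}, using that $\vec{a}$ lies in a quadratic subfield $K$ and that octonion algebras are alternative, so any subalgebra generated by two elements is associative (Artin's theorem). Fix $\vec{a}=(a_1,\dots,a_n)$ with all $a_k\in K$, where $K=F[\alpha]$ for some $\alpha\in A$. Every monomial value $a_1^{d_1}(a_2^{d_2}(\dots(a_n^{d_n})\dots))$ then lies in $K$. Since $K$ is commutative and associative, the bracketing is irrelevant. For any $g\in A[x_1,\dots,x_n]$ we therefore have $g(\vec{a})=\sum_{\vec{e}} c_{\vec{e}}\,\vec{a}^{\vec{e}}$, where $\vec{a}^{\vec{e}}\in K$ is unambiguous.

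Write $f=\sum_{\vec{d}} b_{\vec{d}}x^{\vec{d}}$, so that $\bar f=\sum_{\vec{e}}\overline{b_{\vec{e}}}x^{\vec{e}}$ and
$$\norm(f)=\bar f f=\sum_{\vec{e},\vec{d}}\big(\overline{b_{\vec{e}}}\,b_{\vec{d}}\big)x^{\vec{d}+\vec{e}}.$$
Since $\vec{a}^{\vec{d}+\vec{e}}=\vec{a}^{\vec{e}}\vec{a}^{\vec{d}}$ in $K$, we get
$$\norm(f)(\vec{a})=\sum_{\vec{e}}\sum_{\vec{d}}\big(\overline{b_{\vec{e}}}\,b_{\vec{d}}\big)\big(\vec{a}^{\vec{d}}\vec{a}^{\vec{e}}\big).$$
In the associative case we would regroup each term as $\overline{b_{\vec{e}}}\,(b_{\vec{d}}\vec{a}^{\vec{d}})\,\vec{a}^{\vec{e}}$, sum over $\vec{d}$ to obtain $\overline{b_{\vec{e}}}\,f(\vec{a})\,\vec{a}^{\vec{e}}$, and conclude. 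Here that regrouping is not available, and this is the main obstacle.

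To get around it, I will first regroup $\vec{d}$ and $\vec{e}$ together before touching the coefficients. Writing $f(\vec{a})=\sum_{\vec{d}} b_{\vec{d}}\vec{a}^{\vec{d}}=0$, set $p=\sum_{\vec{d}}b_{\vec{d}}\vec{a}^{\vec{d}}$, which is $0$. The key identity to prove is
$$\norm(f)(\vec{a})=\sum_{\vec{e}}\overline{b_{\vec{e}}}\Big(\big(\textstyle\sum_{\vec{d}} b_{\vec{d}}\vec{a}^{\vec{d}}\big)\vec{a}^{\vec{e}}\Big)$$
or a variant of it. Because $\norm(f)(\vec{a})$ is $F$-valued when only $F$-coefficients of $\norm(f)$ appear, it suffices to check the identity after symmetrizing the pairs $(\vec{d},\vec{e})$ and $(\vec{e},\vec{d})$. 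For fixed $\vec{d},\vec{e}$, put $u=b_{\vec{e}}$, $v=b_{\vec{d}}$, $s=\vec{a}^{\vec{e}}$, $t=\vec{a}^{\vec{d}}$, with $s,t\in K$ commuting. The needed identity is the Moufang-type relation
$$(\bar u v)(ts)+(\bar v u)(st)=\bar u\big((vt)s\big)+\bar v\big((us)t\big),$$
summed appropriately. I would verify it via the standard octonion identities $\bar{u}(uz)=\norm(u)z$, the linearized form $\bar u(vz)+\bar v(uz)=\langle u,v\rangle z$ (with $\langle u,v\rangle=\tr(\bar u v)\in F$), and $(vt)\bar t=\norm(t)v$.

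A cleaner alternative I would try first is the following. Since $\norm(f)(\vec{a})=\sum_{\vec{d},\vec{e}}\overline{b_{\vec{e}}}b_{\vec{d}}\,\vec{a}^{\vec{d}+\vec{e}}$, its value equals $\overline{F_1}F_1$ computed inside a suitable structure. Concretely, decompose $A=K\oplus K^\perp$. Each coefficient is then $b=\beta+\gamma\ell'$ with $\beta,\gamma\in K$ and $\ell'\in K^\perp$, and the multiplication rules are those of the Cayley–Dickson double over $K$, so $k\ell'=\ell'\bar k$ for $k\in K$. Then $f(\vec{a})=P(\vec{a})+Q(\vec{a})'$-type components in $K$ and $K\ell'$. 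Writing $f=P+Q\ell'$ with $P,Q\in K[x]$ and computing $f(\vec a)=P(\vec a)+Q^{\sigma}(\vec a)\ell'$ (where $\sigma$ is the conjugation twist), the hypothesis gives $P(\vec a)=0$ and $\tilde Q(\vec a)=0$. Meanwhile $\norm(f)=\bar PP-\mu\bar QQ$ in commutative form with $\mu\in F$. Both terms then vanish at $\vec a$ because each factor vanishes at $\vec{a}$ or at its conjugate. The bookkeeping of the twist $\sigma$ is the delicate step, and I expect it to be the main obstacle.
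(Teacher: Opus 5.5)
Your first route is the paper's proof. You pair the $(\vec d,\vec e)$ and $(\vec e,\vec d)$ terms of $\norm(f)(\vec a)=\sum_{\vec d,\vec e}(\overline{b_{\vec e}}\,b_{\vec d})\,\vec a^{\vec d+\vec e}$ and rewrite each pair with the linearized identity $\bar u(vz)+\bar v(uz)=\tr(\bar u v)\,z$, which is the Lemma 1.3.3 of Springer--Veldkamp that the paper cites. This leads to $\sum_{\vec e}\overline{b_{\vec e}}\big(f(\vec a)\,\vec a^{\vec e}\big)=0$.

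The route goes through, but three points need tightening:
\begin{itemize}
\item The reassociation $(vt)s=v(ts)$ for $s,t\in K=F[\alpha]$ must be justified by Artin's theorem, applied to the subalgebra generated by $v$ and $\alpha$. This is the one place where the hypothesis that all $a_k$ lie in a single quadratic field is used, and it is what breaks for the commuting pair $(j,\ell)$ in characteristic $2$. The identity $(vt)\bar t=\norm(t)v$ that you list is not needed.
\item Reducing to symmetrized pairs is just a regrouping of a finite sum over ordered pairs. Your stated justification is false: $\norm(f)(\vec a)$ lies in $K$, not in $F$ in general (take $f=x$ and $a\in K$ with $\tr(a)\neq 0$).
\item The diagonal terms $\vec d=\vec e$ must be treated directly by $\bar u(uz)=\norm(u)z$, not by ``symmetrizing''. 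In characteristic $2$ the doubled identity is vacuous.
\end{itemize}

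The ``cleaner alternative'' you say you would try first does not work as stated. Writing every coefficient as $\beta+\gamma\ell'$ with $\beta,\gamma\in K$ and one fixed $\ell'$ only describes the $4$-dimensional subalgebra $K\oplus K\ell'$, which is a quaternion algebra. For separable $K$, the complement $K^\perp$ is $6$-dimensional over $F$, i.e.\ $3$-dimensional over $K$. So a general $f\in A[x_1,\dots,x_n]$ is not of the form $P+Q\ell'$, and the formula $\norm(f)=\bar PP-\mu\bar QQ$ is simply unavailable.

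You could repair this with three $K$-components and a $K$-hermitian form on $K^\perp$ carrying the conjugation twist, but that still only covers separable $K$. Suppose $\operatorname{char}(F)=2$ and $K$ is purely inseparable, e.g.\ $K=F[j]$ with $j^2=b$, which the definition of $A_c^n$ allows. Then the polar form of $\norm$ vanishes identically on $K$, so $K\subseteq K^\perp$ and there is no splitting $A=K\oplus K^\perp$ at all. Your first route uses only alternativity and the composition identities, so it handles every case uniformly; that is the one to write up.
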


\begin{proof}
Write $f=\sum_{\vec{d} \in I} b_{\vec{d}} x_1^{d_1}\dots x_n^{d_n}$. Then, $\bar{f}=\sum_{\vec{e} \in I} \overline{b_{\vec{e}}} \, x_1^{e_1}\dots x_n^{e_n}$. So, $$\norm(f)=\sum_{\vec{d} \in I} \sum_{\vec{e} \in I} \overline{b_{\vec{e}}} \, b_{\vec{d}} \, x_1^{d_1+e_1} \dots x_n^{d_n+e_n}.$$
Suppose $f(\vec{a})=0$. Then, 
\begin{eqnarray*}
\norm(f)(\vec{a}) & = & \sum_{\vec{d} \in I} \sum_{\vec{e} \in I} \left(\overline{b_{\vec{e}}}\, b_{\vec{d}}\right) \left( a_1^{d_1+e_1} \dots a_n^{d_n+e_n}\right)\\
&=&\sum_{\vec{e} \in I} \overline{b_{\vec{e}}} \left(\sum_{\vec{d} \in I} b_{\vec{d}} \, a_1^{d_1+e_1} \dots a_n^{d_n+e_n}\right)\\
&=&\sum_{\vec{e} \in I} \overline{b_{\vec{e}}} \left(\sum_{\vec{d} \in I} b_{\vec{d}} \, a_1^{d_1} \dots a_n^{d_n} a_1^{e_1} \dots a_n^{e_n}\right)\\
&=&\sum_{\vec{e} \in I} \overline{b_{\vec{e}}} (f(\vec{a}) a_1^{e_1} \dots a_n^{e_n})=0.
\end{eqnarray*}
For the step from the first line to the second, we use \cite[Lemma 1.3.3]{SpringerVeldkamp}: if $\vec{d}\neq \vec{e}$, then in the first line, the two terms $\left(\overline{b_{\vec{e}}} b_{\vec{d}}\right) \left( a_1^{d_1+e_1} \dots a_n^{d_n+e_n}\right)$ and $\left(\overline{b_{\vec{d}}} b_{\vec{e}}\right) \left( a_1^{d_1+e_1} \dots a_n^{d_n+e_n}\right)$ appear, and together they make $\tr(\overline{b_{\vec{e}}} b_{\vec{d}}) \left( a_1^{d_1+e_1} \dots a_n^{d_n+e_n}\right)$, and by this lemma, it is equal to the sum of the two terms $\overline{b_{\vec{e}}} \left( b_{\vec{d}} a_1^{d_1+e_1} \dots a_n^{d_n+e_n}\right)$ and $\overline{b_{\vec{d}}} \left( b_{\vec{e}} a_1^{d_1+e_1} \dots a_n^{d_n+e_n}\right)$.
\end{proof}

The following example shows that the last theorem does not hold true if instead of requiring that $a_1,\dots,a_n$ belong to a quadratic extension of the center, we require that they simply commute.

\begin{exmpl}
Suppose $F$ is a field of $\operatorname{char}(F)=2$, and $a,b,c \in F$ are such that $A=[a,b,c)_{2,F}$ is an octonion division algebra over $F$ with generators $i,j,\ell$. In particular, $i^2+i=a$, $j^2=b$ and $\ell^2=c$.
Now, $j$ and $\ell$ commute.
Consider $f=i+\frac{a}{bc}i(j\ell)xy+x+j$.
Then $f(j,\ell)=0$. However,
$\norm(f)=a+x+x^2+b+\frac{a}{bc}x^2y^2$, and so $\norm(f)(j,\ell)=a+j+b+b+a=j\neq 0$.
\end{exmpl}

\section{Zero sets of polynomials} \label{sec:zeros-sets}

In classical algebraic geometry (e.g., see \cite[Chapter I, \S1]{hartshorne}), for a given set of polynomials $S\subseteq k[x_1,\ldots,x_n]$ for an algebraically closed field $k$, we define 
\[\mathcal{V}(S) = \{P\in \mathbb{A}^n_k \,|\, f(P)=0 \text{ for all } f\in S\},\]
where $\mathbb{A}^n_k$ is the $n$-dimensional affine space over the field $k$. A basic property of this operation is that 
\begin{equation}\label{eq:zero-set}
\mathcal{V}(S)=\mathcal{V}(I)	
\end{equation}
where $I$ is the ideal generated by $S$ in $k[x_1,\ldots,x_n]$. 

We cannot expect property \eqref{eq:zero-set} to hold for general division rings. For example this fails for the real quaternions $\mathbb{H}$ already for $n=1$; indeed, $i$ is a root of $x-i$ but is not a root of $(x-i)(x-j)$, despite the latter belonging to the (two-sided) ideal generated by the former. 

We will show that a generalization for \eqref{eq:zero-set} to the division algebra setting is to consider ``central" points in $D^n_c$ and left-ideals. Let $S\subseteq D[x_1,\ldots,x_n]$, then
\[\mathcal{V}_c(S) = \{\vec{a}\in D^n_c \,|\, f(\vec{a})=0 \text{ for all } f\in S\}.\]

An easy corollary for Theorem~\ref{thm:root-of-left-multiple} is the following:
\begin{cor}
	Let $I$ be the left ideal generated by $S\subseteq D[x_1,\ldots,x_n]$, then $\mathcal{V}_c(S)=\mathcal{V}_c(I)$. 
\end{cor}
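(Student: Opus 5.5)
We prove the two inclusions separately. The inclusion $\mathcal{V}_c(I)\subseteq \mathcal{V}_c(S)$ is immediate: $S\subseteq I$, so any point of $D_c^n$ at which every element of $I$ vanishes is in particular a common root of all elements of $S$.

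For the reverse inclusion, fix $\vec{a}\in\mathcal{V}_c(S)$ and an arbitrary $h\in I$. Since $I$ is the left ideal generated by $S$, we can write
\[
h=\sum_{k=1}^{r} g_k f_k
\]
for finitely many $f_k\in S$ and $g_k\in D[x_1,\dots,x_n]$. Each $f_k$ vanishes at $\vec{a}$, and $\vec{a}\in D_c^n$. Theorem~\ref{thm:root-of-left-multiple} therefore gives $(g_kf_k)(\vec{a})=0$ for every $k$.

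To finish, we use that the substitution map $p\mapsto p(\vec{a})$ is additive, as noted in the introduction. Hence
\[
h(\vec{a})=\sum_k (g_kf_k)(\vec{a})=0,
\]
so $\vec{a}\in\mathcal{V}_c(I)$.

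The only step needing care is the representation of elements of $I$. Left ideals are closed under left multiplication, so finite sums $\sum g_kf_k$ already form a left ideal containing $S$, and this is exactly the left ideal generated by $S$. No right multipliers appear, which is precisely why Theorem~\ref{thm:root-of-left-multiple} applies. The example $(x-i)(x-j)$ preceding the statement shows that the argument cannot extend to two-sided ideals.
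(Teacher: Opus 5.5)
Your proof is correct and follows the paper's argument: $\mathcal{V}_c(I)\subseteq\mathcal{V}_c(S)$ is immediate, and the reverse inclusion comes from writing each element of $I$ as a finite sum $\sum g_k f_k$ and applying Theorem~\ref{thm:root-of-left-multiple} to each term. You also spell out the additivity of substitution and the description of the left ideal generated by $S$, which the paper leaves implicit.
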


\begin{proof}
	Clearly, $\mathcal{V}_c(I)\subseteq \mathcal{V}_c(S)$. Let $\vec{a}\in \mathcal{V}_c(S)$, and let $f_1,\ldots,f_k\in S$ and $g_1,\ldots,g_k \in D[x_1,\ldots,x_n]$. Then by Theorem~\ref{thm:root-of-left-multiple} we know $\vec{a}$ is also a root of $\sum_{i=1}^k g_if_i$.
\end{proof}

\begin{rem}
The contents of this section fit in well with the Alon and Paran program of algebraic geometry over the quaternions \cite{AlonParan:2021, AlonParan:2024}, although the corollary does not appear in their papers. 	
\end{rem}

\section{Fixed points and periodic points} \label{sec:dynamics}

We switch back to an associative division ring $D$. We say that $b\in D$ commutes with $\vec{a}\in D^n_c$ if $(\vec{a},b)\in D^{n+1}_c$. 

\begin{lem}
	Let $D$ be a division ring, and let $f,g\in D[x_1,\ldots,x_n],$ and $\vec{a}\in D^n_c$. If $g(\vec{a})$ commutes with $\vec{a}$ and $h=fg$, then $h(\vec{a})=f(\vec{a})g(\vec{a})$. 
\end{lem}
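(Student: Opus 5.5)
The plan is a direct expansion in the style of the proof of Theorem~\ref{thm:root-of-left-multiple}, with the hypothesis that $g(\vec{a})$ commutes with $\vec{a}$ replacing the vanishing hypothesis. Write
\[ f=\sum_{\vec{e}\in J} c_{\vec{e}}\, x_1^{e_1}\cdots x_n^{e_n}, \qquad g=\sum_{\vec{d}\in I} b_{\vec{d}}\, x_1^{d_1}\cdots x_n^{d_n}. \]
Since the variables are central in $D[x_1,\dots,x_n]=D\otimes_F F[x_1,\dots,x_n]$, we have
\[ h=fg=\sum_{\vec{e}\in J}\sum_{\vec{d}\in I} c_{\vec{e}}\, b_{\vec{d}}\, x_1^{d_1+e_1}\cdots x_n^{d_n+e_n}. \]
Substituting $\vec{a}$ gives
\[ h(\vec{a})=\sum_{\vec{e}}\sum_{\vec{d}} c_{\vec{e}}\, b_{\vec{d}}\, a_1^{d_1+e_1}\cdots a_n^{d_n+e_n}. \]

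The first step uses $\vec{a}\in D_c^n$. Because the coordinates commute, the monomial factors as
\[ a_1^{d_1+e_1}\cdots a_n^{d_n+e_n}=\bigl(a_1^{d_1}\cdots a_n^{d_n}\bigr)\bigl(a_1^{e_1}\cdots a_n^{e_n}\bigr). \]
Pulling out the left coefficient $c_{\vec{e}}$ and summing over $\vec{d}$ first then yields
\[ h(\vec{a})=\sum_{\vec{e}} c_{\vec{e}}\, g(\vec{a})\, a_1^{e_1}\cdots a_n^{e_n}. \]
This is exactly the identity already obtained in the proof of Theorem~\ref{thm:root-of-left-multiple}, before the vanishing hypothesis was used there.

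The second step uses the hypothesis that $(\vec{a},g(\vec{a}))\in D_c^{n+1}$, so $g(\vec{a})$ commutes with each $a_k$. It therefore commutes with every monomial $a_1^{e_1}\cdots a_n^{e_n}$, and we may move it to the right:
\[ h(\vec{a})=\Bigl(\sum_{\vec{e}} c_{\vec{e}}\, a_1^{e_1}\cdots a_n^{e_n}\Bigr) g(\vec{a}) = f(\vec{a})\, g(\vec{a}). \]

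No real obstacle is expected. The only point needing care is the bookkeeping of which coefficient sits on the left. We must expand $h$ so that the coefficients of $f$ are leftmost, as $c_{\vec{e}}b_{\vec{d}}$, which is legitimate because the variables are central in the polynomial ring. We must also keep the exponents of $g$ to the left of those of $f$ in the monomial, which is allowed since the $a_k$ commute with one another.
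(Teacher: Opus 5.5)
Your proof is correct and is essentially the paper's argument. The paper applies Theorem~\ref{thm:root-of-left-multiple} to $g-g(\vec{a})$ to get $h(\vec{a})=\bigl(f\cdot g(\vec{a})\bigr)(\vec{a})=\sum_{\vec{e}} c_{\vec{e}}\,g(\vec{a})\,a_1^{e_1}\cdots a_n^{e_n}$, which is your first-step identity, and then moves $g(\vec{a})$ to the right exactly as in your second step; the only difference is that you inline the expansion from that theorem's proof rather than citing it and appealing to additivity of substitution.
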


\begin{proof}
	The vector $\vec{a}$ is a root of $g-g(\vec{a})$. Therefore, by Theorem \ref{thm:root-of-left-multiple}, we know $\vec{a}$ is a root of $f\left(g-g(\vec{a})\right) = h - fg$ too. 
	
	Note that since $\vec{a}$ commutes with $g(\vec{a})$ we have $\left(f\cdot g(\vec{a})\right)(\vec{a})=f(\vec{a})g(\vec{a})$. Indeed, let $f = \sum_{\vec{d}\in I} c_{\vec{d}} \, x_1^{d_1}\dots x_n^{d_n}$, then $f\cdot g(\vec{a}) = \sum_{\vec{d}\in I} c_{\vec{d}} \, g(\vec{a})x_1^{d_1}\dots x_n^{d_n}$, and so $\left(f\cdot g(\vec{a})\right)(\vec{a})= \sum_{\vec{d}\in I} c_{\vec{d}} g(\vec{a})a_1^{d_1}\dots a_n^{d_n}$ and since $\vec{a}$ commutes with $g(\vec{a})$ this is equal to $\sum_{\vec{d}\in I} c_{\vec{d}}\, a_1^{d_1}\dots a_n^{d_n} g(\vec{a})=f(\vec{a})g(\vec{a})$.
	
	Since the subsitution map is additive, we get $h(\vec{a})-f(\vec{a})g(\vec{a})=0$, which implies  $h(\vec{a})=f(\vec{a})g(\vec{a})$ as required. 
\end{proof}

\begin{cor}\label{cor:product}
If $D$ is a division ring, $f_1,\ldots,f_t \in D[x_1,\ldots,x_n],$ $\vec{a}\in{D^n_c}$ and $f_i(\vec{a})$ commutes with $\vec{a}$ for $1\le i\le t-1$, then $(f_t\cdots f_1)(\vec{a})=f_t(\vec{a})\cdots f_1(\vec{a})$ for any integer $t \ge 2$.
\end{cor}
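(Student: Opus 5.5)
We argue by induction on $t$, using the preceding Lemma at each step. For $t=2$ the statement is exactly the Lemma with $f=f_2$ and $g=f_1$: by hypothesis $f_1(\vec{a})$ commutes with $\vec{a}$, so $(f_2f_1)(\vec{a})=f_2(\vec{a})f_1(\vec{a})$.

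For the inductive step, assume the claim for $t-1\ge 2$ factors and suppose $f_1(\vec{a}),\dots,f_{t-1}(\vec{a})$ each commute with $\vec{a}$. Set $g=f_{t-1}\cdots f_1$. Applying the induction hypothesis to $f_1,\dots,f_{t-1}$ (whose first $t-2$ values commute with $\vec{a}$) gives
\[
g(\vec{a})=f_{t-1}(\vec{a})\cdots f_1(\vec{a}).
\]
Next we check that $g(\vec{a})$ commutes with $\vec{a}$, i.e.\ $(\vec{a},g(\vec{a}))\in D^{n+1}_c$. Each factor $f_i(\vec{a})$ with $i\le t-1$ commutes with every coordinate $a_k$. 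The centralizer of $a_k$ in $D$ is a subring, so it is closed under products. Hence $g(\vec{a})$ commutes with each $a_k$, and since the coordinates of $\vec{a}$ already commute pairwise, $(\vec{a},g(\vec{a}))\in D^{n+1}_c$.

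Applying the Lemma with $f=f_t$ and this $g$ yields
\[
(f_t\cdots f_1)(\vec{a})=(f_tg)(\vec{a})=f_t(\vec{a})\,g(\vec{a})=f_t(\vec{a})f_{t-1}(\vec{a})\cdots f_1(\vec{a}),
\]
which completes the induction.

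The only point needing care is the closure step: "commutes with $\vec{a}$" is defined through membership in $D^{n+1}_c$, so we must reduce it to commuting with each coordinate and then use that centralizers are closed under multiplication. Note that $f_t(\vec{a})$ never needs to commute with $\vec{a}$, because it is always the left factor in the Lemma.
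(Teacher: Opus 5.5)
Your proof is correct and matches the paper's intended argument, which the paper states only as induction on $t$ using the preceding Lemma. Your grouping $f_t\cdot(f_{t-1}\cdots f_1)$ needs the extra centralizer-closure step, which you handle correctly; grouping instead as $(f_t\cdots f_2)\cdot f_1$ and applying the Lemma with $g=f_1$ would avoid that step, but either route works.
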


\begin{cor}\label{cor:power}
If $D$ is a division ring, $f \in D[x_1,\ldots,x_n],$ $\vec{a}\in{D^n_c}$ and $f(\vec{a})$ commutes with $\vec{a}$, then $f^t(\vec{a})=(f(\vec{a}))^t$ for any integer $t \ge 1$.
\end{cor}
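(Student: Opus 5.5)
We argue by induction on $t$, using Corollary~\ref{cor:product} (or directly the preceding lemma) with every factor equal to $f$. For $t=1$ there is nothing to prove. For $t=2$, apply the lemma with $g=f$: since $f(\vec{a})$ commutes with $\vec{a}$, we get $f^2(\vec{a})=(f\cdot f)(\vec{a})=f(\vec{a})f(\vec{a})=(f(\vec{a}))^2$.

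For the inductive step, assume $f^{t-1}(\vec{a})=(f(\vec{a}))^{t-1}$ for some $t\ge 2$. Write $f^t=f^{t-1}\cdot f$, so the left factor is $f^{t-1}$ and the right factor is $g=f$. The lemma needs only that $g(\vec{a})=f(\vec{a})$ commutes with $\vec{a}$, which is our hypothesis. Hence
$$f^t(\vec{a})=f^{t-1}(\vec{a})\,f(\vec{a})=(f(\vec{a}))^{t-1}f(\vec{a})=(f(\vec{a}))^t.$$
Alternatively, Corollary~\ref{cor:product} with $f_1=\dots=f_t=f$ gives the claim at once, since the hypothesis that each $f_i(\vec{a})$, $1\le i\le t-1$, commutes with $\vec{a}$ reduces to the single assumption on $f(\vec{a})$.

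The only point needing care is the direction of factorization. The lemma requires the \emph{right} factor's value to commute with $\vec{a}$, so we must split $f^t$ as $f^{t-1}\cdot f$ rather than $f\cdot f^{t-1}$. Otherwise we would need $(f(\vec{a}))^{t-1}$ to commute with $\vec{a}$. That is in fact also true, since powers of an element commuting with $\vec{a}$ commute with $\vec{a}$, but with the chosen splitting it is not needed.
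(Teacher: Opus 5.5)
Your proof is correct and takes the same route as the paper, which says only that the corollary follows by induction on $t$. Splitting $f^t=f^{t-1}\cdot f$, so that the lemma's commutation hypothesis falls on the right factor $f$, is the right way to run that induction; equivalently, apply Corollary~\ref{cor:product} with all $f_i=f$.
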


Both corollaries are easily proved by induction on $t$.

Given an $n$-tuple $T=(f_1,\dots,f_n)$ of polynomials $f_1,\dots,f_n \in D[x_1,\dots,x_n]$, we define the ``orbit" of $\vec{a}\in D^n$ under $T$ to be the sequence $(\vec{a_i})_{i=1}^\infty$ of vectors in $D^n$ given by $\vec{a}_1=\vec{a}$ and $\vec{a}_{i+1}=T(\vec{a}_i)=(f_1(\vec{a}_i),\dots,f_n(\vec{a}_i))$ for all $i\in \mathbb{N}$.
We say that $\vec{a}\in D_c^n$ ``commutes with its orbit'' if $(\vec{a}_1,\dots,\vec{a}_m) \in D_c^{mn}$ for all $m\in \mathbb{N}$. In particular, if $\vec{a}\in D_c^n$ is a ``fixed point" of $T$, i.e., $T(\vec{a})=\vec{a}$, then it commutes with its orbit.

\begin{thm}
	Let $T=(f_1,\ldots,f_n)$ where $f_1,\ldots,f_n\in D[x_1,\ldots,x_n].$  Suppose that $\vec{a}\in D^n_c$ commutes with its orbit. Then $T^{\circ{m}}(\vec{a})=\vec{a}_{m}$ for any integer $m\ge 1$. 
\end{thm}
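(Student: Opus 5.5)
Note the indexing: $\vec{a}_1=\vec{a}$, so with $T^{\circ 1}=T$ we have $T(\vec{a})=\vec{a}_2$. Read literally, the statement $T^{\circ m}(\vec{a})=\vec{a}_m$ is off by one. What we will prove is $T^{\circ m}(\vec{a})=\vec{a}_{m+1}$ for all $m\ge 1$. This amounts to the same claim if one takes $T^{\circ 0}$ to be the identity and shifts the index, and the argument does not depend on the convention. Throughout, write $T^{\circ m}=(f_1^{(m)},\dots,f_n^{(m)})$, where $f_k^{(1)}=f_k$ and $f_k^{(m)}=f_k(f_1^{(m-1)},\dots,f_n^{(m-1)})$.

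The proof is by induction on $m$. The case $m=1$ is the definition $\vec{a}_2=T(\vec{a})$. For the inductive step, assume $f_\ell^{(m-1)}(\vec{a})$ equals the $\ell$-th coordinate of $\vec{a}_m$ for every $\ell$. We must show that evaluating $f_k(f_1^{(m-1)},\dots,f_n^{(m-1)})$ at $\vec{a}$ gives $f_k(\vec{a}_m)$. Write $f_k=\sum_{\vec{d}} c_{\vec{d}}\,x_1^{d_1}\cdots x_n^{d_n}$. Since substitution is additive and left multiplication by the constant $c_{\vec{d}}$ passes through evaluation, it suffices to treat a single product of polynomials
\[(f_1^{(m-1)})^{d_1}\cdots (f_n^{(m-1)})^{d_n}.\]
Evaluated at $\vec{a}$, this should equal the corresponding product of the coordinates of $\vec{a}_m$.

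The key tool for this step is \Cref{cor:product}, applied to the list of factors obtained by expanding the powers, with $t=d_1+\dots+d_n$. Its hypothesis requires that the value at $\vec{a}$ of every factor except one commutes with $\vec{a}$. By the induction hypothesis, each such value is a coordinate of $\vec{a}_m$. Because $\vec{a}$ commutes with its orbit, $(\vec{a}_1,\dots,\vec{a}_m)\in D_c^{mn}$, so every coordinate of $\vec{a}_m$ commutes with $\vec{a}=\vec{a}_1$. Hence the hypothesis holds. \Cref{cor:product} then gives that the evaluation of the product is the product of the evaluations, namely $(a_{m,1})^{d_1}\cdots(a_{m,n})^{d_n}$, where $a_{m,\ell}$ is the $\ell$-th coordinate of $\vec{a}_m$. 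One small point: \Cref{cor:product} is stated for the product $f_t\cdots f_1$ in that order, but a product of the $f_\ell^{(m-1)}$ can be listed in whatever order matches the corollary, so this causes no difficulty. Summing over $\vec{d}$ with the coefficients $c_{\vec{d}}$ yields $f_k(\vec{a}_m)$, which is the $k$-th coordinate of $\vec{a}_{m+1}$.

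The main obstacle is really a matter of bookkeeping. The commutation needed is between the values $f_\ell^{(m-1)}(\vec{a})$ and the fixed base point $\vec{a}$, not among the coordinates of $\vec{a}_m$ themselves. This is exactly what the orbit condition provides, since $\vec{a}_1$ and $\vec{a}_m$ jointly have commuting coordinates. Note also that \Cref{cor:product} evaluates everything at the original $\vec{a}$, and that is the reason the induction is arranged by expressing $T^{\circ m}$ as $T$ composed with $T^{\circ(m-1)}$ on the inside, rather than on the outside.
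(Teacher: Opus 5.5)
Your proof is correct and follows essentially the same route as the paper: induction on $m$ with $T^{\circ m}=T\circ T^{\circ(m-1)}$, expansion of $f_k$ into monomials, and the product corollary, with the needed commutation of the coordinates of the orbit point with $\vec a$ supplied by the orbit hypothesis. The paper also cites \Cref{cor:power}, which your expansion of powers into individual factors subsumes.

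Your index correction to $T^{\circ m}(\vec a)=\vec a_{m+1}$ is right; the paper's base case (``trivial for $m=1$'') contains the same off-by-one slip. Your ordering remark is moot, since you only relabel the factors of the product rather than reorder them, and reordering would not be allowed here anyway.
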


\begin{proof}
	For any integer $m\ge 1$ we denote 
	\[T^{\circ{m}}=\left(F_1^{(m)},\ldots,F_n^{(m)}\right),\]
	for appropriate $F_1^{(m)},\ldots,F_n^{(m)} \in D[x_1,\ldots,x_n]$. Clearly, $F_i^{(1)}=f_i$ for $1\le i \le n$, and
	\[F_i^{(m)}=f_i(T^{\,\circ (m-1)})=f_i\left(F_1^{(m-1)},\ldots,F_n^{(m-1)}\right).\]
	
	Set $\vec{b}=\vec{a}_{m-1}$ and $\vec{e}=\vec{a}_m$. We prove by induction on $m$ that $F_i^{(m)}(\vec{a})=e_i$ for any integer $m\ge{1}$ and any $1\le{i}\le{n}$. This is trivial for $m=1$. Assume $F_i^{(m-1)}(\vec{a})=b_i$. Write 
	\[f_i=\sum_{\vec{d} \in I_i} c_{\vec{d}} \, x_1^{d_1}\dots x_n^{d_n}\]

Now, 
	\[F_i^{(m)} = \sum_{\vec{d} \in I_i} c_{\vec{d}} \, \left(F_1^{(m-1)}\right)^{d_1}\dots \left(F_n^{(m-1)}\right)^{d_n}.\]
Since $\vec{a}$ commutes with $F_i^{(m-1)}(\vec{a})=b_i$ for $1\le{i}\le{n}$, using the additivity of the substitution map and Corollaries \ref{cor:product} and \ref{cor:power}, we get
\begin{align*}
	F_i^{(m)}(\vec{a}) &= \sum_{\vec{d} \in I_i} c_{\vec{d}} \, F_1^{(m-1)}(\vec{a})^{d_1}\dots F_n^{(m-1)}(\vec{a})^{d_n} \\
		&= \sum_{\vec{d} \in I_i} c_{\vec{d}} \, b_1^{d_1}\dots b_n^{d_n} = f_i(\vec{b})=e_i,
\end{align*}
as required. 

Therefore for any integer $m\ge{1}$ we have $T^{\,\circ{m}}(\vec{a})=\left(F_1^{(m)}(\vec{a}),\ldots,F_n^{(m)}(\vec{a})\right)=(e_1,\ldots,e_n)=\vec{e}.$
\end{proof}

\begin{cor}
Let $T=(f_1,\ldots,f_n)$ where $f_1,\ldots,f_n\in D[x_1,\ldots,x_n].$  Suppose that $\vec{a}\in D^n_c$ is a fixed point of $T$, i.e.\ $T(\vec{a})=\vec{a}$. Then $T^{\circ{m}}(\vec{a})=\vec{a}$ for any integer $m\ge 1$. 
\end{cor}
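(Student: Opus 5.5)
The plan is to deduce this corollary directly from the preceding theorem, by checking that a fixed point commutes with its orbit and that its orbit is constant.

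First, compute the orbit of $\vec{a}$ under $T$. By definition $\vec{a}_1=\vec{a}$ and $\vec{a}_{i+1}=T(\vec{a}_i)$. Since $T(\vec{a})=\vec{a}$, a trivial induction on $i$ gives $\vec{a}_i=\vec{a}$ for every $i\in\mathbb{N}$: if $\vec{a}_i=\vec{a}$, then $\vec{a}_{i+1}=T(\vec{a})=\vec{a}$.

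Next, verify that $\vec{a}$ commutes with its orbit, i.e.\ $(\vec{a}_1,\dots,\vec{a}_m)\in D_c^{mn}$ for all $m$. This tuple is just $\vec{a}$ repeated $m$ times, so its coordinates all lie in $\{a_1,\dots,a_n\}$. These coordinates pairwise commute because $\vec{a}\in D^n_c$. This is also the observation made in the paper right after defining ``commutes with its orbit''.

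Finally, apply the preceding theorem. It gives $T^{\circ m}(\vec{a})=\vec{a}_m=\vec{a}$ for every integer $m\ge 1$, which is the claim. There is no real obstacle here: all the substantive work is in the theorem, whose proof uses Corollaries~\ref{cor:product} and \ref{cor:power}. The only point needing care is the commuting condition, and it is immediate because the orbit is constant.
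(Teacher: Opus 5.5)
Your proposal is correct and follows the paper's route. The paper gives no separate proof: it relies on its remark that a fixed point commutes with its (constant) orbit, so the preceding theorem yields $T^{\circ m}(\vec{a})=\vec{a}$. Because the orbit is constant, your argument is also unaffected by an off-by-one in the theorem's statement: with $\vec{a}_1=\vec{a}$, one actually gets $T^{\circ m}(\vec{a})=\vec{a}_{m+1}$, not $\vec{a}_m$.
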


\begin{cor}
Let $T=(f_1,\ldots,f_n)$ where $f_1,\ldots,f_n\in D[x_1,\ldots,x_n],$ $\vec{a}\in D^n_c$ and let $(\vec{a_i})_{i=1}^\infty$ be the orbit of $\vec{a}$. If $a_1 = a_m$ for some integer $m \ge 1$ (i.e., the orbit is periodic), then $T^{\circ{mn}}(\vec{a})=\vec{a}$ for any integer $n\ge 1$. 
\end{cor}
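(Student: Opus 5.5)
The plan is to reduce everything to the preceding theorem, which identifies the polynomial iterate evaluated at $\vec{a}$ with the corresponding term of the orbit. First I fix the indexing. Since $\vec{a}_1=\vec{a}$ and $\vec{a}_{i+1}=T(\vec{a}_i)$, the point obtained by applying $T$ $k$ times is $\vec{a}_{k+1}$. I therefore read the periodicity hypothesis as $\vec{a}_{1+p}=\vec{a}_1$ for some period $p\ge 1$, and the goal as $T^{\circ kp}(\vec{a})=\vec{a}$ for every integer $k\ge 1$. I rename the multiplier $k$ to avoid a clash with the number of variables $n$. Correspondingly, I use the conclusion of the preceding theorem in the form $T^{\circ m}(\vec{a})=\vec{a}_{m+1}$; its inductive proof shows $F^{(m)}_i(\vec{a})=f_i(\vec{b})$, where $\vec{b}$ is the value of $T^{\circ(m-1)}$ at $\vec{a}$, and this is exactly the shift by one.

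The key hypothesis I need is that $\vec{a}$ commutes with its orbit. Unlike a fixed point, a periodic point does not automatically have this property: the entries of $\vec{a}_2,\dots,\vec{a}_p$ need not commute with those of $\vec{a}$. So I would state the corollary under the standing hypothesis of the preceding theorem. Once periodicity holds, the orbit is the finite list $\vec{a}_1,\dots,\vec{a}_p$ repeated. Hence the condition $(\vec{a}_1,\dots,\vec{a}_M)\in D_c^{Mn}$ for all $M$ reduces to the single finite check $(\vec{a}_1,\dots,\vec{a}_p)\in D_c^{pn}$.

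Next, a straightforward induction on $k$ gives $\vec{a}_{1+kp}=\vec{a}_1$. Indeed, $\vec{a}_{1+(k+1)p}$ is obtained from $\vec{a}_{1+kp}=\vec{a}_1$ by applying $T$ pointwise $p$ times, exactly as $\vec{a}_{1+p}$ is obtained from $\vec{a}_1$. This step uses only pointwise evaluation, so no commutativity is needed here. Applying the preceding theorem with $m=kp$ then gives $T^{\circ kp}(\vec{a})=\vec{a}_{kp+1}=\vec{a}$, as required.

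The only real obstacle is the hypothesis mismatch just described. Without commutation with the orbit, the theorem fails, because evaluating the composite polynomial $T^{\circ m}$ at $\vec{a}$ need not equal iterating $T$ pointwise. So I would either add that hypothesis explicitly, or first try to derive it from periodicity. I expect the derivation to be impossible in general, and to test this I would look for a quaternion counterexample with $p=2$.
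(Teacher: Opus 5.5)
This is essentially the paper's (unwritten) argument, namely applying the preceding theorem with $m=kp$, and your repairs are all justified: with $\vec a_1=\vec a$ and $\vec a_{i+1}=T(\vec a_i)$ the theorem really gives $T^{\circ m}(\vec a)=\vec a_{m+1}$, the period condition must read $\vec a_{1+p}=\vec a_1$, and the hypothesis that $\vec a$ commutes with its orbit has to be added.

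That hypothesis cannot be derived from periodicity, which confirms your suspicion. Take $f=x^2-x+c\in\mathbb{H}[x]$ with $c=1+i+j$, and $a=i$. Then $f(i)=j$ and $f(j)=i$, so the orbit $i,j,i,j,\dots$ has period $2$. But $T^{\circ 2}=f(f)=x^4-2x^3+2c\,x^2+(1-2c)x+c^2$ evaluates at $i$ to $i+2k\neq i$. So the corollary is false as printed, and your version with the added hypothesis is the correct statement.
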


\bibliographystyle{abbrv}
\bibliography{bibfile}

@book {GilleSzamuely2006,
    AUTHOR = {Gille, Philippe and Szamuely, Tam{\'a}s},
     TITLE = {Central simple algebras and {G}alois cohomology},
    SERIES = {Cambridge Studies in Advanced Mathematics},
    VOLUME = {101},
 PUBLISHER = {Cambridge University Press, Cambridge},
      YEAR = {2006},
     PAGES = {xii+343},
      ISBN = {978-0-521-86103-8; 0-521-86103-9},
   MRCLASS = {16K20 (14F22 19C30)},
  MRNUMBER = {2266528},
MRREVIEWER = {Gr{\'e}gory Berhuy},
       DOI = {10.1017/CBO9780511607219},
       URL = {http://dx.doi.org/10.1017/CBO9780511607219},
}

@article {AlonParan:2021,
    AUTHOR = {Alon, Gil and Paran, Elad},
     TITLE = {A central quaternionic {N}ullstellensatz},
   JOURNAL = {J. Algebra},
  FJOURNAL = {Journal of Algebra},
    VOLUME = {574},
      YEAR = {2021},
     PAGES = {252--261},
      ISSN = {0021-8693,1090-266X},
   MRCLASS = {16H05 (16S36)},
  MRNUMBER = {4213620},
MRREVIEWER = {John\ S.\ Kauta},
       DOI = {10.1016/j.jalgebra.2021.01.018},
       URL = {https://doi-org.bengurionu.idm.oclc.org/10.1016/j.jalgebra.2021.01.018},
}

@article {AlonParan:2024,
    AUTHOR = {Alon, Gil and Paran, Elad},
     TITLE = {On the geometry of zero sets of central quaternionic
              polynomials},
   JOURNAL = {J. Algebra},
  FJOURNAL = {Journal of Algebra},
    VOLUME = {659},
      YEAR = {2024},
     PAGES = {780--788},
      ISSN = {0021-8693,1090-266X},
   MRCLASS = {16H05 (14A22 16S36)},
  MRNUMBER = {4779787},
MRREVIEWER = {Liyu\ Liu},
       DOI = {10.1016/j.jalgebra.2024.07.019},
       URL = {https://doi-org.bengurionu.idm.oclc.org/10.1016/j.jalgebra.2024.07.019},
}

@article{AlonChapmanParan:2025,
author = {Alon, Gil and Chapman, Adam and Paran, Elad},
title = {On the geometry of zero sets of central quaternionic polynomials. {II}},
YEAR = {2025},
journal = {Israel Journal of Mathematics, to appear},
doi = {10.1007/s11856-025-2879-y}
}

@article {Chapman:2020,
    AUTHOR = {Chapman, Adam},
     TITLE = {Polynomial equations over octonion algebras},
   JOURNAL = {J. Algebra Appl.},
  FJOURNAL = {Journal of Algebra and its Applications},
    VOLUME = {19},
      YEAR = {2020},
    NUMBER = {6},
     PAGES = {2050102, 10},
      ISSN = {0219-4988},
   MRCLASS = {17D05 (15A18 15B33)},
  MRNUMBER = {4120079},
       DOI = {10.1142/S0219498820501029},
       URL = {https://doi.org/10.1142/S0219498820501029},
}

@article{Chapman:2020b,
 author = {Chapman, Adam},
 title = {Factoring octonion polynomials},
 fjournal = {International Journal of Algebra and Computation},
 journal = {Int. J. Algebra Comput.},
 issn = {0218-1967},
 volume = {30},
 number = {7},
 pages = {1457--1463},
 year = {2020},
 language = {English},
 doi = {10.1142/S0218196720500484},
 zbMATH = {7261096},
 Zbl = {1479.17012}
}

@article {ChapmanMachen:2017,
    AUTHOR = {Chapman, Adam and Machen, Casey},
     TITLE = {Standard polynomial equations over division algebras},
   JOURNAL = {Adv. Appl. Clifford Algebr.},
  FJOURNAL = {Advances in Applied Clifford Algebras},
    VOLUME = {27},
      YEAR = {2017},
    NUMBER = {2},
     PAGES = {1065--1072},
      ISSN = {0188-7009},
   MRCLASS = {16K20 (15A18 15B33)},
  MRNUMBER = {3651503},
MRREVIEWER = {Ivan D. Chipchakov},
       DOI = {10.1007/s00006-016-0740-4},
       URL = {https://doi.org/10.1007/s00006-016-0740-4},
}

@article{ChapmanVishkautsan:2021,
 author = {Chapman, Adam and Vishkautsan, Solomon},
 title = {Fixed points of polynomials over division rings},
 fjournal = {Bulletin of the Australian Mathematical Society},
 journal = {Bull. Aust. Math. Soc.},
 issn = {0004-9727},
 volume = {104},
 number = {2},
 pages = {256--262},
 year = {2021},
 language = {English},
 doi = {10.1017/S0004972721000113},
 zbMATH = {7394391},
 Zbl = {1485.16024}
}

@article{ElduqueVilla:2005,
 author = {Elduque, Alberto and Villa, Oliver},
 title = {A note on the linkage of {Hurwitz} algebras},
 fjournal = {Manuscripta Mathematica},
 journal = {Manuscr. Math.},
 issn = {0025-2611},
 volume = {117},
 number = {1},
 pages = {105--110},
 year = {2005},
 language = {English},
 doi = {10.1007/s00229-004-0512-7},
 zbMATH = {2175526},
 Zbl = {1103.11014}
}

@article {GordonMotzkin1965,
    AUTHOR = {Gordon, B. and Motzkin, T. S.},
     TITLE = {On the zeros of polynomials over division rings},
   JOURNAL = {Trans. Amer. Math. Soc.},
  FJOURNAL = {Transactions of the American Mathematical Society},
    VOLUME = {116},
      YEAR = {1965},
     PAGES = {218--226},
      ISSN = {0002-9947},
   MRCLASS = {12.10 (16.46)},
  MRNUMBER = {0195853 (33 \#4050a)},
}

@book{hartshorne,
  author = {Hartshorne, Robin},
  publisher = {Springer},
  series = {Graduate Texts in Mathematics},
  title = {{Algebraic Geometry}},
  volume = 52,
  year = 1977
}

@book {SpringerVeldkamp,
    AUTHOR = {Springer, Tonny A. and Veldkamp, Ferdinand D.},
     TITLE = {Octonions, {J}ordan algebras and exceptional groups},
    SERIES = {Springer Monographs in Mathematics},
 PUBLISHER = {Springer-Verlag, Berlin},
      YEAR = {2000},
     PAGES = {viii+208},
      ISBN = {3-540-66337-1},
   MRCLASS = {17A75 (17C10 17C30 20G15)},
  MRNUMBER = {1763974},
MRREVIEWER = {Plamen Koshlukov},
       DOI = {10.1007/978-3-662-12622-6},
       URL = {https://doi.org/10.1007/978-3-662-12622-6},
}

@article {Wedderburn:1921,
    AUTHOR = {Wedderburn, J. H. M.},
     TITLE = {On division algebras},
   JOURNAL = {Trans. Amer. Math. Soc.},
  FJOURNAL = {Transactions of the American Mathematical Society},
    VOLUME = {22},
      YEAR = {1921},
    NUMBER = {2},
     PAGES = {129--135},
      ISSN = {0002-9947},
   MRCLASS = {16K40 (12E15)},
  MRNUMBER = {1501164},
       DOI = {10.2307/1989011},
       URL = {https://doi.org/10.2307/1989011},
}

\end{document}